\newtheorem{theorem}{Theorem}     
\newtheorem{qn}{Question}     
\newtheorem{corollary}{Corollary}
\theoremstyle{definition}
\newtheorem{definition}{Definition}
\newtheorem{example}{Example}
\newtheorem{remark}{Remark}
\title{\textbf{Random Finite Sum sets and Product Sets in Subsets of the Natural Numbers}}
\author{Sukrit Chakraborty\thanks{Department of Mathematics, Achhruram Memorial College, Jhalda, Purulia 723202, West Bengal, India. Email: sukritpapai@gmail.com} \and 
Sayan Goswami\thanks{Department of Mathematics, Ramakrishna Mission Vivekananda Educational and Research Institute, Belur Math, Howrah, West Bengal-711202, India. Email: sayan92m@gmail.com}\and 
Sourav Kanti Patra\thanks{Department of Mathematics, Kishori Sinha Mahila College, Q976+WXP, Aurangabad, Bihar 824101, India. Email: souravkantipatra@gmail.com}}
\date{\today}
\begin{document}
	
	\maketitle

	\begin{abstract}
		We investigate the occurrence of additive and multiplicative structures in random subsets of the natural numbers. 
		Specifically, for a Bernoulli random subset of $\mathbb{N}$ where each integer is included independently with probability $p\in(0,1)$, we prove that almost surely such a set contains finite sumsets (FS-sets) and finite product sets (FP-sets) of every finite length. Then we prove any Bernoulli random subset of $\mathbb{N}$ contains a pattern of the form $\{x,y,x+y,xy\},$ giving a random solution of the Hindman conjecture.

In addition, we establish a novel connection between Hindman’s partition theorem and the central limit theorem, providing a probabilistic perspective on the asymptotic Gaussian behavior of monochromatic finite sums and products. These results can be interpreted as probabilistic analogues of finite-dimensional versions of Hindman's theorem. 
		Applications, implications, and open questions related to infinite FS-sets and FP-sets are discussed.
	\end{abstract}

	\noindent \textbf{Keywords:} Random subsets of natural numbers, finite sumsets, finite product sets, Hindman's theorem, Hindman conjecture, probabilistic combinatorics, central limit theorem

	%\noindent \textbf{MSC (2020):} Primary: 05D10, 60C05 Secondary: 11B75
    \noindent \textbf{Mathematics Subject Classification (2020):} Primary 05D10, 60C05; Secondary 11B75.

	\section{Introduction}
    Ramsey theory is built on the principle that large combinatorial structures inevitably contain highly regular configurations. A central challenge, however, is to understand how these deterministic guarantees behave when randomness is introduced. Just as random analogues of van der Waerden’s and Szemerédi’s theorems have reshaped our understanding of arithmetic structure in random sets, we investigate Hindman’s theorem in the probabilistic setting. Our results show that the hallmark finite-sum and finite-product configurations persist with high probability and exhibit central limit behavior, revealing a new intersection of Ramsey theory and probabilistic combinatorics.
    
	The study of additive and multiplicative structures in subsets of natural numbers has a long and rich history, ranging from classical results in combinatorics to modern developments in probabilistic and additive number theory \cite{Nathanson1996,TaoVu2006,AlonSpencer2016,Bollobas2001}. A central theme is understanding how structured configurations, such as arithmetic progressions, finite sumsets, and product sets, appear either deterministically or in a random setting \cite{Gowers2001,Komlos1996}.
	
	Hindman's theorem \cite{Hindman1974,Hindman1979,Baumgartner1974} is a cornerstone of additive Ramsey theory. It asserts that for any finite coloring of the natural numbers, there exists an infinite sequence whose finite sumset is monochromatic. This profound result guarantees the existence of highly structured sets under arbitrary colorings, highlighting the inevitability of additive combinatorial structure. Extensions and related results were further developed by Bergelson and Hindman \cite{Bergelson2001,Bergelson2016}, exploring partition-regular and polynomial configurations. Analogous multiplicative phenomena have been considered in the context of product sets, although the literature is less extensive.
	
	In probabilistic combinatorics, random subsets of $\mathbb{N}$ provide a natural framework to study the typical presence of additive and multiplicative structures. Classical works, such as those by Erd\H{o}s and R\'enyi \cite{ErdosRenyi1960}, established that almost every random subset contains various prescribed finite patterns, including arithmetic progressions. More recent probabilistic and sparse random set results were developed by Conlon and Gowers \cite{ConlonGowers2016} and R\"odl and Ruci\'nski \cite{RodlRucinski1995}, but explicit statements regarding finite sumsets (FS-sets) and finite product sets (FP-sets) in random subsets have not been widely formalized in the literature.

	In this work, we establish the following results, rigorously proving that almost every random subset of the natural numbers contains finite FS-sets and FP-sets of every finite length. These results can be interpreted as probabilistic analogues of the finite-dimensional versions of Hindman's theorem:
	
	\begin{itemize}
		\item \textbf{Theorem A (Random Finite Sumsets)}: In a random subset of $\mathbb{N}$ formed by including each integer independently with probability $p \in (0,1)$, almost surely for every $L \ge 1$ there exists a finite sumset of length $L$ contained in the set (see Theorem~\ref{thm:FS} for precise statement and proof).
		\item \textbf{Theorem B (Random Finite Product Sets)}: Similarly, in the same random model, almost surely for every $L \ge 1$ there exists a finite product set of length $L$ contained in the set (see Theorem~\ref{thm:FP} for precise statement and proof).
	\end{itemize}
	
	Beyond these foundational results, we uncover a novel connection between Hindman's theorem and the central limit theorem, offering a probabilistic perspective on the asymptotic Gaussian behavior of sums and products constrained to a single color class. These theorems provide rigorous probabilistic guarantees for the appearance of additive and multiplicative structures, complementing classical deterministic results such as Hindman's theorem and recent studies in random Ramsey theory \cite{ConlonGowers2016,RodlRucinski1995}. They also open avenues for investigating thresholds for the appearance of FS- and FP-sets in finite random sets and potential infinite generalizations.

	The remainder of the paper is organized as follows. In Section~\ref{sec:preliminaries}, we introduce the preliminaries, including the definitions of random subsets of $\mathbb{N}$, finite sumsets (FS-sets), and finite product sets (FP-sets), along with motivating remarks. Section~\ref{sec:main_results} presents the main theorems, establishing the almost sure existence of FS-sets and FP-sets of arbitrary finite length in random subsets, accompanied by detailed proofs. Then we prove a random version of the Hindman conjecture. In Section~\ref{sec:Hindman-CLT}, we combine Hindman’s partition theorem with the central limit theorem to establish a probabilistic perspective on monochromatic finite sums and products, providing a detailed proof. In Section~\ref{sec:corollaries}, we discuss immediate corollaries and structural consequences of the main results, such as the appearance of arbitrarily long arithmetic progressions and infinitely many disjoint FS-sets and FP-sets. Finally, Section~\ref{sec:conclusion} concludes with a summary of our findings and outlines several open problems and directions for future research.
	
	\section{Preliminaries} \label{sec:preliminaries}
	
	Before diving into the main results, we introduce the fundamental concepts and notation that will be used throughout this paper. 
	Our goal in this section is to provide not only formal definitions, but also the motivation behind studying random FS-sets and FP-sets, giving the reader a clear conceptual roadmap.
	
	\subsection*{Random Subsets of $\mathbb{N}$}
	
	Random subsets of natural numbers form a natural probabilistic analogue of deterministic combinatorial problems. 
	They allow us to study the \emph{typical} behavior of additive and multiplicative structures rather than worst-case configurations. 
	
	\begin{definition}[Random Subset of $\mathbb{N}$]
		Fix a probability $p\in(0,1)$. 
		A \emph{random subset} $A \subset \mathbb{N}$ is constructed by including each $n\in\mathbb{N}$ independently with probability $p$. 
		This model is often called the \emph{binomial model} and is denoted by $\mathbb{N}_p$.
	\end{definition}
	
	\begin{remark}
		This construction ensures independence of membership events. 
		It provides a rich probabilistic framework where one can ask questions like: 
		\emph{What is the probability that a certain combinatorial pattern appears in $A$?} 
		For example, almost surely, certain additive and multiplicative configurations will appear infinitely often, as we shall see.
	\end{remark}
	
	\subsection*{Finite Sumsets (FS-sets)}
	
	Additive combinatorics centers on understanding how sums of elements from a set behave. 
	FS-sets encode all possible nonempty sums generated by a finite sequence of integers, serving as building blocks for more complex additive structures.
	
	\begin{definition}[Finite Sumset]
		Given integers $x_1, x_2, \dots, x_L \in \mathbb{N}$, the \emph{finite sumset} generated by these numbers is
		\[
		\operatorname{FS}(x_1,\dots,x_L) := 
		\left\{ \sum_{i\in F} x_i : \varnothing \neq F \subset \{1, \dots, L\} \right\}.
		\]
	\end{definition}
	
	\begin{example}
		If $x_1 = 1$ and $x_2 = 3$, then 
		\[
		\operatorname{FS}(1,3) = \{1, 3, 4\},
		\]
		corresponding to sums of singletons and the sum of both elements.
	\end{example}
	
	\begin{remark}
		FS-sets capture the essence of additive structure: 
		finding an FS-set of length $L$ inside a set $A$ guarantees that $A$ contains a rich collection of additive relations among $L$ elements.
	\end{remark}
	
	\subsection*{Finite Product Sets (FP-sets)}
	
	Similarly, multiplicative combinatorics focuses on products of elements. 
	FP-sets encode all possible nonempty products from a finite sequence, forming a multiplicative analogue of FS-sets.
	
	\begin{definition}[Finite Product Set]
		Given integers $x_1, x_2, \dots, x_L \in \mathbb{N}$, the \emph{finite product set} is
		\[
		\operatorname{FP}(x_1,\dots,x_L) := 
		\left\{ \prod_{i\in F} x_i : \varnothing \neq F \subset \{1, \dots, L\} \right\}.
		\]
	\end{definition}
	
	\begin{example}
		If $x_1 = 2$ and $x_2 = 5$, then 
		\[
		\operatorname{FP}(2,5) = \{2, 5, 10\}.
		\]
	\end{example}
	
	\begin{remark}
		Just as FS-sets reveal additive patterns, FP-sets reveal multiplicative structure. 
		Studying FP-sets in random subsets of $\mathbb{N}$ helps uncover typical multiplicative configurations in probabilistic settings.
	\end{remark}
	
%	\subsection*{Objectives and Probabilistic Perspective}
	
%	In this paper, our aim is to prove that almost surely a random set $A \sim \mathbb{N}_p$ contains FS-sets and FP-sets of every finite length. 
%	The probabilistic model provides a surprisingly strong guarantee: 
%	\emph{with probability 1}, these additive and multiplicative patterns are abundant, highlighting the ubiquity of combinatorial structures in random settings. 
	
%	These preliminaries set the stage for the main theorems, where we rigorously establish the existence of arbitrarily large finite sumsets and product sets in random subsets of $\mathbb{N}$.
	
	\section{Main Results and Their Proofs} \label{sec:main_results}

	In this section, we present the main contributions of this paper, establishing that random subsets of the natural numbers almost surely contain finite sumsets and product sets of every finite length. We provide constructive proofs demonstrating the almost sure existence of these additive and multiplicative structures. The proofs rely on combinatorial constructions, independence arguments, and the Borel--Cantelli lemma, illustrating that even in a probabilistic setting, highly structured configurations are unavoidable.
	
	\begin{theorem}[Random Finite Sumsets]\label{thm:FS}
		Fix $p\in(0,1)$. Let $A\subset\mathbb{N}$ be distributed as $\mathbb{N}_p$. 
		Then with probability $1$, the following holds: 
		
		For every integer $L\ge 1$ there exist integers $x_1<\cdots<x_L$ such that
		\[
		\operatorname{FS}(x_1,\dots,x_L)\subset A.
		\]
		In other words, almost surely $A$ contains finite FS-sets of arbitrarily large finite length.
	\end{theorem}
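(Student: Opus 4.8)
The plan is to fix a target length $L$ and show that, almost surely, $A$ contains an FS-set of that length; since a countable intersection of almost-sure events is almost sure, this suffices. The natural strategy is to carve $\mathbb{N}$ into infinitely many disjoint \emph{blocks}, each large enough to host a potential generating tuple $x_1<\cdots<x_L$ together with all $2^L-1$ of its subset-sums, and then argue that each block independently has a positive (constant) probability of ``succeeding'' — meaning some tuple inside it has its entire FS-set landing in $A$. Because the blocks are disjoint, the membership events within different blocks are independent, so infinitely many blocks succeed with probability $1$ by the second Borel--Cantelli lemma; in particular at least one does, giving the desired FS-set of length $L$.

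Concretely, I would first pin down a convenient deterministic gadget: inside an interval of length $N$ (for $N$ suitably large depending on $L$), one can choose integers $x_1<\cdots<x_L$ so that the $2^L-1$ partial sums $\sum_{i\in F}x_i$ are \emph{distinct} and all lie in a fixed interval $I$ of length $O(N)$. A clean choice is $x_i = M + e_i$ for a large base $M$ and small distinct perturbations $e_i$ chosen so that all subset sums of the $e_i$ are distinct (e.g.\ $e_i$ growing fast enough, or simply take the $x_i$ themselves to be a Sidon-type or rapidly-growing set); then $\sum_{i\in F}x_i = |F|M + \sum_{i\in F}e_i$ is determined by $(|F|,\,\sum_{i\in F}e_i)$, and one checks these $2^L-1$ values are distinct. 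The point is only that a \emph{fixed} finite configuration of $s:=2^L-1$ distinct integers occurs inside a block of bounded size $N=N(L)$, so the probability that $A$ contains all $s$ of these specific integers is exactly $p^{s}>0$, a constant independent of which block we are in.

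Then I would define the blocks: let $B_k = \{\,kN+1,\dots,(k+1)N\,\}$ for $k=0,1,2,\dots$, translate the gadget configuration into $B_k$, and let $E_k$ be the event ``$A$ contains the translated configuration (all $s$ integers) inside $B_k$.'' The events $E_k$ are mutually independent (they depend on disjoint coordinates of the product probability space) and each has probability $p^{s}>0$, so $\sum_k \Pr(E_k)=\infty$; by Borel--Cantelli, $\Pr(E_k \text{ i.o.})=1$, hence almost surely some $E_k$ occurs, and on that event $\operatorname{FS}(x_1,\dots,x_L)\subset A$ for the translated tuple. Finally, intersect over all $L\ge 1$: the event ``for every $L$ there is an FS-set of length $L$ in $A$'' is a countable intersection of probability-$1$ events, hence has probability $1$.

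The only real obstacle is the combinatorial gadget: one must make sure the subset sums of the chosen tuple are genuinely distinct (so that the required event really is ``$s$ specified integers all lie in $A$'' with the honest probability $p^{s}$, rather than something more delicate) and that the whole FS-set fits in a block of size depending on $L$ alone. This is elementary — any sufficiently fast-growing sequence, such as $x_i$ with $x_{i+1}>x_1+\cdots+x_i$, forces all subset sums to be distinct — so the argument goes through with no analytic input beyond the two Borel--Cantelli lemmas and independence across disjoint blocks. (The same template, with products in place of sums and $x_{i+1}$ larger than the product of the previous terms, will prove Theorem~\ref{thm:FP}.)
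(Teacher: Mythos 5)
Your overall strategy --- produce infinitely many pairwise disjoint realizations of a fixed $(2^L-1)$-element configuration, each succeeding with probability $p^{2^L-1}$, apply the second Borel--Cantelli lemma using independence across disjoint supports, and intersect over countably many $L$ --- is exactly the paper's strategy. But your concrete block decomposition fails, and in a way that matters. If $B_k=\{kN+1,\dots,(k+1)N\}$ and $x_1<\cdots<x_L$ all lie in $B_k$, then the full sum satisfies $x_1+\cdots+x_L\ge L(kN+1)>(k+1)N$ for every $L\ge 2$ and $k\ge 1$, so $\operatorname{FS}(x_1,\dots,x_L)$ cannot be contained in $B_k$: an FS-set of length $L$ necessarily spans a multiplicative range of at least $L$, while your blocks have $\max/\min\to 1$. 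The related slip is the word ``translate'': translating the generators $x_i\mapsto x_i+t$ sends the subset sum over $F$ to $\sum_{i\in F}x_i+|F|\,t$, so the FS-set is \emph{not} a translate of the original one, and with your gadget $x_i=M+e_i$ the FS-sets attached to consecutive bases $M$ and $M+N$ occupy overlapping intervals $[M,LM+O(1)]$ and $[M+N,L(M+N)+O(1)]$, destroying the disjointness on which the independence of the events $E_k$ rests.

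The repair is to make the windows grow geometrically rather than linearly, i.e.\ to use dilation instead of translation --- which is precisely what the paper does. It takes $P=\{1,2,4,\dots,2^{L-1}\}$, whose FS-set is exactly $\{1,\dots,R\}$ with $R=2^L-1$, and dilates: $P_j=(R+1)^jP$, so that $\operatorname{FS}(P_j)=(R+1)^j\{1,\dots,R\}$ lies in $\bigl[(R+1)^j,\,(R+1)^jR\bigr]$, and the inequality $(R+1)^{j+1}>(R+1)^jR$ makes these intervals pairwise disjoint. (Dilation commutes with taking subset sums, unlike translation.) With that substitution for your blocks, the rest of your argument --- distinct subset sums via a super-increasing or binary pattern, probability $p^{2^L-1}$ per copy, independence, Borel--Cantelli, countable intersection over $L$ --- goes through verbatim and coincides with the paper's proof.
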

	
    \begin{proof}
    	Fix an arbitrary \(L\ge1\). Set \(R:=2^L-1\) and consider the pattern
    	\[
    	P := \{1,2,4,\dots,2^{L-1}\}.
    	\]
    	The nonempty subset-sums of \(P\) are exactly \(\{1,2,\dots,R\}\), since binary 
    	expansions represent all integers from \(1\) up to \(2^L-1\). Thus
    	\[
    	S := \operatorname{FS}(P) = \{1,2,\dots,R\}.
    	\]
    	
    	We dilate \(P\) by powers of \(R+1\). For each \(j\ge0\) define
    	\[
    	P_j := (R+1)^j P = \{(R+1)^j\cdot 1,\,(R+1)^j\cdot 2,\,(R+1)^j\cdot 4,\dots,(R+1)^j\cdot 2^{L-1}\},
    	\]
    	and write its set of subset-sums as
    	\[
    	S_j := \operatorname{FS}(P_j) = (R+1)^j S
    	= \big\{(R+1)^j s : s\in S\big\}.
    	\]
    	Equivalently,
    	\[
    	S_j = \Big\{(R+1)^j\sum_{i\in F} i : F\subseteq P,\ F\neq\varnothing\Big\}.
    	\]
    	Each \(S_j\) lies in the interval
    	\[
    	\big[(R+1)^j\cdot 1,\,(R+1)^j\cdot R\big],
    	\]
    	and since \((R+1)^{j+1}>(R+1)^j R\) the intervals for different \(j\) are pairwise disjoint. Hence the sets \(S_j\) are pairwise disjoint.
    	
    	Now define the event
    	\[
    	E_j := \{ S_j \subset A \}.
    	\]
    	Each \(S_j\) consists of exactly \(R\) distinct integers, and inclusion of each integer into \(A\) is independent with probability \(p\). Therefore
    	\[
    	\mathbb{P}(E_j) = p^R >0.
    	\]
    	Because the sets \(S_j\) are disjoint for distinct \(j\), the events \((E_j)_{j\ge0}\) are independent. Since
    	\[
    	\sum_{j=0}^\infty \mathbb{P}(E_j) = \sum_{j=0}^\infty p^R = \infty,
    	\]
    	the second Borel--Cantelli lemma implies that with probability \(1\) infinitely many of the events \(E_j\) occur.
    	
    	Each occurrence of \(E_j\) gives \(\operatorname{FS}(P_j)=S_j\subset A\), so with probability \(1\) there exist integers \(x_1<\cdots<x_L\) (namely the elements of \(P_j\) ordered) whose finite sumset lies entirely in \(A\). Since \(L\) was arbitrary, by taking a countable intersection of almost sure events we conclude that almost surely \(A\) contains FS-sets of every finite length.
    \end{proof}

	\begin{theorem}[Random Finite Product Sets]\label{thm:FP}
		Fix $p\in(0,1)$. Let $A\subset\mathbb{N}$ be distributed as $\mathbb{N}_p$. 
		Then with probability $1$, the following holds: 
		
		For every integer $L\ge 1$ there exist integers $x_1<\cdots<x_L$ such that
		\[
		\operatorname{FP}(x_1,\dots,x_L)\subset A.
		\]
		In other words, almost surely $A$ contains finite FP-sets of arbitrarily large finite length.
	\end{theorem}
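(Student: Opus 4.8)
The plan is to transport the additive construction from Theorem~\ref{thm:FS} through the exponential map, exploiting that a product of prime-power–shaped numbers is governed by the \emph{sum} of their exponents. Fix an arbitrary $L\ge 1$, set $R:=2^L-1$, and recall that the nonempty subset-sums of $\{1,2,4,\dots,2^{L-1}\}$ are exactly $\{1,2,\dots,R\}$. Since $\prod_{i\in F}2^{a_i}=2^{\sum_{i\in F}a_i}$, choosing the exponents to form this ``binary pattern'' turns an FP-set into a block of consecutive powers of $2$, and the additive dilation trick then carries over verbatim.

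Concretely, for each $j\ge 0$ define
\[
P_j := \{\, 2^{(R+1)^j\cdot 2^{k}} : 0\le k\le L-1 \,\}
= \{\, 2^{(R+1)^j},\, 2^{2(R+1)^j},\, 2^{4(R+1)^j},\dots, 2^{2^{L-1}(R+1)^j} \,\},
\]
so that
\[
S_j := \operatorname{FP}(P_j) = \{\, 2^{(R+1)^j s} : 1\le s\le R \,\},
\]
a set of exactly $R$ distinct integers whose members are among $\{2^m : (R+1)^j\le m\le R(R+1)^j\}$. Because $(R+1)^{j+1}>R(R+1)^j$, these exponent windows are pairwise disjoint, hence the $S_j$ are pairwise disjoint.

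From here the argument is identical to the sumset case: the event $E_j:=\{S_j\subset A\}$ has probability $p^{R}>0$; disjointness of the $S_j$ makes the events $(E_j)_{j\ge0}$ independent; and $\sum_{j\ge0}p^{R}=\infty$, so the second Borel--Cantelli lemma yields that almost surely infinitely many $E_j$ occur. Each occurrence produces integers $x_1<\cdots<x_L$ (the elements of $P_j$, which are increasing powers of $2$) with $\operatorname{FP}(x_1,\dots,x_L)=S_j\subset A$. Taking a countable intersection over all $L$ completes the proof.

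The one genuinely new point, and the step to get right, is arranging the dilated copies to be pairwise disjoint: in the multiplicative world a product over $F$ picks up a scaling constant to the power $|F|$, so naive multiplicative scaling does not behave as cleanly as additive translation. Passing to exponents (equivalently, through $\log$) converts this back to the additive dilation already handled above, which resolves it. An alternative that sidesteps the issue entirely is to work with distinct primes: take $P_j := \{\pi_j,\ \pi_j^2,\ \pi_j^4,\dots,\pi_j^{2^{L-1}}\}$ with $\pi_j$ the $j$-th prime, so that $S_j=\{\pi_j^s : 1\le s\le R\}$ and disjointness across $j$ is immediate from unique factorization; the Borel--Cantelli step is then unchanged.
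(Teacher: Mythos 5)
Your proposal is correct and follows essentially the same strategy as the paper: encode subset-products as subset-sums of exponents via the binary pattern $\{1,2,4,\dots,2^{L-1}\}$, produce infinitely many pairwise disjoint copies of the resulting $R$-element set, and apply the second Borel--Cantelli lemma. The only cosmetic difference is that your primary construction secures disjointness through dilated exponents of the single base $2$, whereas the paper uses distinct prime bases $q_j$ with $x_{j,i}=q_j^{2^{i-1}}$ --- which is exactly the alternative you describe in your final paragraph.
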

	
	\begin{proof}
		Fix an arbitrary $L\ge 1$ and let $m:=2^L-1$. Let $(q_j)_{j\ge1}$ be an increasing sequence of distinct prime numbers.
		
		For each $j\ge 1$, define
		\[
		x_{j,i} := q_j^{\,2^{i-1}}, \qquad i=1,\dots,L.
		\]
		Then for any nonempty subset $F\subset\{1,\dots,L\}$ we have
		\[
		\prod_{i\in F} x_{j,i}
		= \prod_{i\in F} q_j^{2^{i-1}}
		= q_j^{\sum_{i\in F} 2^{i-1}}.
		\]
		The exponents $\sum_{i\in F}2^{i-1}$ range over all integers from $1$ to $2^L-1=m$. Thus
		\[
		\operatorname{FP}(x_{j,1},\dots,x_{j,L}) = \{ q_j^1, q_j^2,\dots,q_j^m\}.
		\]
		Define
		\[
		S_j := \{ q_j^1, q_j^2,\dots,q_j^m\}.
		\]
		 
		Since, the $q_j$ are distinct primes, the sets $S_j$ are pairwise disjoint. (Indeed, powers of different primes are distinct integers.)	Define the event
		\[
		E_j := \{ S_j \subset A \}.
		\]
		The set $S_j$ has $m$ elements, and the probability that they all lie in $A$ is
		\[
		\mathbb{P}(E_j) = p^m >0.
		\]
		 
		Again, the sets $S_j$ are disjoint, so the events $(E_j)_{j\ge 1}$ are independent. Since, 
		\[
		\sum_{j=1}^\infty \mathbb{P}(E_j) = \sum_{j=1}^\infty p^m = \infty,
		\]
		the second Borel--Cantelli lemma implies that with probability $1$ infinitely many of the events $E_j$ occur.
		
		Consequently,  each $E_j$ guarantees that $\operatorname{FP}(x_{j,1},\dots,x_{j,L})\subset A$. Thus with probability $1$, $A$ contains an FP-set of length $L$. Since $L$ was arbitrary, by countable intersection almost surely $A$ contains finite FP-sets of arbitrarily large finite length.
	\end{proof}
	
	Recent research in Ramsey theory has investigated the occurrence of exponential patterns. In particular, answering a conjecture of Sisto~\cite{Sisto}, J.~Sahasrabudhe established the following result.
	
	\begin{theorem}[Sahasrabudhe--Schur Theorem {\cite[Theorem 2]{Sahasrabudhe}}]\label{thm:sahasrabudhe-schur}
		For any finite colouring of $\mathbb{N}$, there exists a monochromatic pattern of the form $\{x,y,x\cdot y,x^y\}$.
	\end{theorem}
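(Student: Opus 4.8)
The plan is to pass from the three mutually incompatible operations $+$, $\cdot$, $\uparrow$ on $\mathbb{N}$ to a single sub‑structure on which exponentiation is tame, and then to reduce the problem to a finitary Ramsey statement (this is a genuinely deep result, so I only describe the line of attack). Concretely, I would work with the double tower $E:=\{2^{2^{n}}:n\ge 0\}$ and the set $2^{\mathbb{N}}:=\{2^{b}:b\ge 1\}$ of powers of two, exploiting that $E$ is closed under exponentiation by powers of two: if $x=2^{2^{a}}\in E$ and $y=2^{b}\in 2^{\mathbb{N}}$, then $x^{y}=2^{\,2^{a}\cdot 2^{b}}=2^{\,2^{a+b}}\in E$, while $xy=2^{\,2^{a}+b}$ is at least a power of two. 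Given a finite colouring $c$ of $\mathbb{N}$, put $\delta(n):=c(2^{n})$. Choosing $x=2^{2^{a}}$ and $y=2^{b}$ with $b\ne 2^{a}$ (so $x\ne y$), the four numbers $x$, $y$, $xy$, $x^{y}$ receive the colours $\delta(2^{a})$, $\delta(b)$, $\delta(2^{a}+b)$, $\delta(2^{a+b})$, respectively. Thus everything reduces to the following single‑colouring assertion: for every finite colouring $\delta$ of $\mathbb{N}$ there exist $a,b\ge 1$ with $b\ne 2^{a}$ such that
\[
\delta(2^{a})=\delta(b)=\delta(2^{a}+b)=\delta(2^{a+b}).
\]
Here the last two entries encode the additive combination $2^{a}+b$ and the multiplicative combination $2^{a}\cdot 2^{b}$ of the same parameters, and it is the simultaneous presence of both that makes the statement subtle.

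To build such a configuration I would proceed in stages. First secure $\delta(2^{a})=\delta(2^{a+b})$: writing $\eta(k):=\delta(2^{k})=c(2^{2^{k}})$, this reads $\eta(a)=\eta(a+b)$, which follows from Hindman's theorem (or merely Schur's theorem) applied to $\eta$, with $a$ and $a+b$ drawn from a monochromatic FS‑set and $b$ ranging over FS‑sums of the remaining generators. The remaining task is to pin that $b$ down so that also $\delta(b)=\delta(2^{a}+b)$ takes the same colour. This step resists a soft pigeonhole, since the map $b\mapsto 2^{a}+b$ pushes $b$ off any monochromatic set one might have prepared; instead I would feed in a van der Waerden / Gallai--Witt type input — after linearising so the lacunary quantity $2^{a}$ acts as an ordinary translation — possibly supplemented by running the argument across several towers $2^{2^{n}},3^{3^{n}},4^{4^{n}},\dots$ simultaneously to force the colours on the additive and multiplicative combinations to coincide. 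For an $r$‑colouring one then iterates the construction with towers of height growing in $r$ to create enough room for all colour classes.

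The hardest part — and the reason the statement is far deeper than Theorems~\ref{thm:FS} and~\ref{thm:FP}, or than classical Hindman — is precisely this coupling. The additive term $2^{a}+b$ and the multiplicative term $2^{a+b}=2^{a}\cdot 2^{b}$ tie the same pair $(a,b)$ together in incompatible ways, so one cannot first fix a monochromatic structure and then locate the pattern inside it by elementary counting; and since exponentiation is neither associative nor commutative, the idempotent‑ultrafilter machinery in $\beta\mathbb{N}$ that powers Hindman's theorem is unavailable. Reconciling the additive and multiplicative constraints against a coordinate that itself scales with the parameters is the ``moving‑coordinate'' obstruction, and overcoming it is what forces the quantitative strength of van der Waerden / Gallai--Witt (equivalently a Hales--Jewett input); that is the step that carries the weight of the argument.
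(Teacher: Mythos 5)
The paper does not prove this statement at all: it is imported verbatim as Theorem~2 of the cited work of Sahasrabudhe \cite{Sahasrabudhe}, so there is no internal argument to compare yours against. Judged on its own terms, your write-up is not a proof but a programme with an acknowledged hole at exactly the point where the theorem is hard. Your reduction step is correct and worth keeping: with $x=2^{2^{a}}$ and $y=2^{b}$ one indeed has $c(x)=\delta(2^{a})$, $c(y)=\delta(b)$, $c(xy)=\delta(2^{a}+b)$ and $c(x^{y})=\delta(2^{a+b})$ for $\delta(n):=c(2^{n})$, and this correctly trades the exponential pattern for a mixed additive--multiplicative pattern one level down the tower (this tower/logarithm device is in the spirit of Sahasrabudhe's actual argument). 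But the reduced assertion --- that every finite colouring $\delta$ admits $a,b$ with $\delta(2^{a})=\delta(b)=\delta(2^{a}+b)=\delta(2^{a+b})$ --- is itself a nontrivial Ramsey statement coupling a Schur-type sum $2^{a}+b$ with a product $2^{a}\cdot 2^{b}$ in the same two parameters, and you do not establish it. The passage invoking ``a van der Waerden / Gallai--Witt type input, after linearising so that $2^{a}$ acts as an ordinary translation, possibly supplemented by running the argument across several towers'' names candidate tools but specifies no colouring to which they are applied, no quantifier structure, and no mechanism by which the colour of $b$ is forced to agree with the colour of $2^{a}+b$ while $a$ simultaneously satisfies $\eta(a)=\eta(a+b)$; as you yourself note, this coupling is the entire content of the theorem. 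A correct submission would either carry out that step in full or simply cite \cite[Theorem 2]{Sahasrabudhe}, as the paper does.
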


	Motivated by this deterministic result, we now present a probabilistic analogue which shows that the same exponential pattern almost surely appears inside a random subset of $\mathbb{N}$.

	\begin{theorem}[Random subset version]
		Fix $p \in (0,1)$. Let $A \subset \mathbb{N}$ be the Bernoulli random subset in which each $n \in \mathbb{N}$ is included independently with probability $p$. Then with probability $1$ the set $A$ contains infinitely many triples of the form $\{x,y,x\cdot y, x^y\}$. In particular, there exists at least one such triple with all three elements in $A$.
	\end{theorem}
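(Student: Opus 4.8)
The plan is to follow exactly the template used for Theorems~\ref{thm:FS} and~\ref{thm:FP}: exhibit a countable family of candidate patterns that occupy pairwise disjoint blocks of integers, so that the corresponding ``containment'' events are independent and each has a fixed positive probability, and then invoke the second Borel--Cantelli lemma.

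First I would build a rapidly increasing sequence of anchors. Set $n_1 := 2$ and recursively $n_{k+1} := n_k^{\,n_k+1}+1$. For each $k\ge 1$ put $x_k := n_k$ and $y_k := n_k+1$, and let
\[
Q_k := \{\,x_k,\ y_k,\ x_k y_k,\ x_k^{\,y_k}\,\}
= \{\,n_k,\ n_k+1,\ n_k(n_k+1),\ n_k^{\,n_k+1}\,\}.
\]
A short calculation (using only $n_k\ge 2$) shows that the four displayed integers are strictly increasing, hence distinct, and that
\[
Q_k \subseteq \big[\,n_k,\ n_k^{\,n_k+1}\,\big] = \big[\,n_k,\ n_{k+1}-1\,\big].
\]
Since these intervals are pairwise disjoint, the sets $Q_k$ are pairwise disjoint.

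Next I would define the event $E_k := \{Q_k \subseteq A\}$. As $Q_k$ consists of (at most) four fixed integers, each belonging to $A$ independently with probability $p$, we have $\mathbb{P}(E_k)\ge p^{4}>0$. Because the $Q_k$ are pairwise disjoint, the events $(E_k)_{k\ge 1}$ depend on disjoint collections of independent membership indicators, and are therefore mutually independent. From $\sum_{k\ge 1}\mathbb{P}(E_k)\ge\sum_{k\ge 1}p^{4}=\infty$ together with the second Borel--Cantelli lemma, almost surely infinitely many of the $E_k$ occur. Each such $k$ gives $x_k,y_k\in A$ with $x_k y_k\in A$ and $x_k^{\,y_k}\in A$, i.e.\ a pattern $\{x,y,xy,x^y\}\subseteq A$; since distinct $k$ yield disjoint patterns, almost surely $A$ contains infinitely many of them, and in particular at least one.

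I do not expect a genuine obstacle: the only point needing care is checking that the anchors grow quickly enough for the blocks $\big[n_k,\ n_k^{\,n_k+1}\big]$ to be disjoint and that $Q_k$ really has four distinct elements, both elementary once the recursion $n_{k+1}=n_k^{\,n_k+1}+1$ is fixed. (If one prefers the degenerate configuration with $x=y$, the same argument works verbatim with $Q_k=\{n_k,\ n_k^{2},\ n_k^{\,n_k}\}$ and the weaker spacing $n_{k+1}>n_k^{\,n_k}$, replacing $p^4$ by $p^3$.) The one conceptual remark worth making is that, in contrast to the deterministic Sahasrabudhe--Schur theorem (Theorem~\ref{thm:sahasrabudhe-schur}), no partition-regularity input is needed here: disjointness of the candidate patterns is precisely what makes the events independent, which is the entire content of the Borel--Cantelli step.
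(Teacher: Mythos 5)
Your proof is correct and follows essentially the same route as the paper's: both construct infinitely many pairwise disjoint copies of the pattern $\{x,y,x\cdot y,x^y\}$, observe that the containment events are independent each with probability $p^4$, and apply the second Borel--Cantelli lemma. The only difference is cosmetic: you secure pairwise disjointness via rapid growth of the anchors $n_k$ (so the copies live in disjoint intervals), whereas the paper takes $x$ and $y$ from two disjoint sequences of distinct primes and argues disjointness from factorization.
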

	
	\begin{proof}
		Choose two disjoint infinite sequences of distinct primes $(u_j)_{j \geq 1}$ and $(v_j)_{j \geq 1}$ (for instance, partition the odd primes into two infinite subsequences). For each $j$ consider the triple
		\[
		T_j := \{\,u_j, \; v_j, \; u_j^{v_j}, \; u_j^{\,v_j}\,\}.
		\]
		Because the $u_j$ are distinct primes and the $v_j$ are distinct integers, all elements of distinct triples $T_j$ are distinct. Thus the triples $T_j$ are pairwise disjoint, and the events
		\[
		E_j := \{T_j \subset A\}
		\]
		are independent. Each $E_j$ has probability
		\[
		\mathbb{P}(E_j) = p^4 > 0.
		\]
		Since $\sum_{j=1}^{\infty} \mathbb{P}(E_j) = \sum_{j=1}^{\infty} p^4 = +\infty$, the (second) Borel--Cantelli lemma implies that with probability $1$ infinitely many of the $E_j$ occur. Each occurrence yields a triple $\{x,y,x\cdot y,x^y\}$ contained in $A$.
	\end{proof}
	
	\begin{remark}
    We record some observations regarding the construction and its generalizations:  
    \begin{enumerate}
        \item The key to the Borel--Cantelli argument is producing infinitely many \emph{pairwise disjoint} instances of the pattern, each occurring with the same positive probability; independence then yields almost sure existence of infinitely many such triples. The construction using distinct primes as bases (and distinct exponents) ensures disjointness and distinctness of the power values.
		
		\item This method applies to many related patterns $P(x,y)$, as long as one can produce infinitely many pairwise disjoint realizations of the pattern, each with a fixed finite number of entries. For example, patterns such as $\{x,y,x+y\}$ or $\{x,y,x^y\}$ can be handled in the same way.
		
		\item Moreover, one can strengthen the theorem to say that for any fixed finite family of finite patterns (each involving finitely many integers), a Bernoulli($p$) random subset almost surely contains infinitely many disjoint copies of each pattern, provided suitable disjoint embeddings exist.
        \end{enumerate}
	\end{remark}

%Now we will prove a probabilistic analogue of an anti- Ramsey theoretic result. From \cite{Sahasrabudhe}, we know that there exists a finite coloring of $\mathbb{N}$ such that the pattern $\{x,y,x+y,x^y\}$ is not monochromatic. But here we show the corresponding probabilistic version is true.
    
%	The probabilistic result proved below shows that with probability $1$ a random Bernoulli subset of $\mathbb{N}$ contains a configuration of the form $\{x,y,xy,x^y\}$. The natural deterministic analogue, in the spirit of Hindman’s theorem, would assert that such a configuration is \emph{partition-regular}, i.e.\ that for every finite coloring of $\mathbb{N}$ there exist $x,y \in \mathbb{N}$ such that $\{x,y,xy,x^y\}$ is monochromatic. While Hindman’s classical theorem guarantees large monochromatic additive (or multiplicative) structures, the mixed exponential (multiplicative) pattern considered here lies beyond the scope of known results and remains open. Thus, the random model provides a positive almost sure existence result in contrast with the unresolved status of the corresponding deterministic Hindman-type conjecture.
\section*{Randomized Hindman Conjecture}
One of the central open problems in Ramsey theory asks whether there exists a simultaneous additive and multiplicative analogue of Schur’s theorem for the same pair $x,y$. 

\begin{qn}\label{Question}\textup{(\cite[Question 11]{update}, \cite[Page 58]{erdos}, \cite[Question 3]{hls03})}
    Is the pattern $\{x,y,x+y,xy\}$ weakly partition regular?
\end{qn}
In $1979$, using brute force computations, independently Hindman \cite{Hindman1979} and Graham \cite{.} established an affirmative answer to Question \ref{Question} in the case of $2$-colorings. After that, Hindman conjectured that the Question \ref{Question} has an affirmative answer, which is today known as the \emph{Hindman conjecture}. Recently, Bowen \cite{...} provided a combinatorial proof for this result. In $2016$ (see \cite{GS}) Green and Sanders settled this question over finite fields.  Subsequently, Kousek \cite{IK} proved this result using Ergodic theory. A major breakthrough came with the work of Moreira \cite{anal}, who proved that for any finite coloring of $\mathbb{N}$, there exist $x, y \in \mathbb{N}$ such that the set $\{x, x+y, x \cdot y\}$ is monochromatic. Subsequently, Alweiss found a concise proof in \cite{al1}. Over infinite fields, this conjecture was solved recently in \cite{add1, add2} by Bowen and Sabok, and Alweiss independently.

Although this conjecture remains unsolved to this day, in this work we establish a randomized version of the Hindman conjecture.

	\begin{theorem}[Random solution of Hindman Conjecture]
		Let $p \in (0,1)$ and let $A \subset \mathbb{N}$ be the Bernoulli random subset in which each $n \in \mathbb{N}$ is independently included in $A$ with probability $p$. Then with probability $1$, there exists at least one quadruple of natural numbers $\{x, y, x+y, x\cdot y\}$ contained entirely in $A$.
	\end{theorem}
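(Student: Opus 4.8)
The plan is to mimic the Borel--Cantelli strategy used in the preceding theorems: exhibit infinitely many pairwise disjoint candidate quadruples, each realized in $A$ with a fixed positive probability, and then invoke the second Borel--Cantelli lemma together with independence. The one subtlety relative to the earlier arguments is that the four entries $x$, $y$, $x+y$, $xy$ of a single quadruple are not free parameters — they are determined by the pair $(x,y)$ — so I must first pin down a single concrete pair $(x_0,y_0)$ for which $\{x_0,y_0,x_0+y_0,x_0y_0\}$ has exactly four distinct elements (e.g. $x_0=2$, $y_0=3$, giving $\{2,3,5,6\}$), and then produce infinitely many disjoint translates or dilates of this configuration.

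First I would fix such a base pair, say $x_0=2,\ y_0=3$, so the base quadruple is $Q_0=\{2,3,5,6\}$, a set of four distinct positive integers. Next I would generate an infinite disjoint family: for instance, scale by a rapidly growing sequence of multipliers. The cleanest route is multiplicative — for a strictly increasing sequence of positive integers $(m_k)_{k\ge1}$ set $x_k=m_k x_0$, $y_k=m_k y_0$; then $x_k+y_k=m_k(x_0+y_0)$ and $x_k y_k = m_k^2 x_0 y_0$, so
\[
Q_k := \{x_k,\,y_k,\,x_k+y_k,\,x_k y_k\} = m_k\{x_0,y_0,x_0+y_0\}\cup\{m_k^2 x_0 y_0\},
\]
which is a genuine instance of the pattern $\{x,y,x+y,xy\}$ with $x=x_k$, $y=y_k$. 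Choosing the $m_k$ to grow fast enough (e.g. $m_{k+1} > m_k^2\, x_0 y_0$, which forces $\min Q_{k+1} > \max Q_k$) makes the $Q_k$ pairwise disjoint; a short inequality check confirms each $Q_k$ still has four distinct elements since it is a positive dilate of $Q_0$.

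Then I would define $E_k := \{Q_k \subseteq A\}$. Each $Q_k$ consists of exactly four fixed distinct integers, and membership in $A$ is independent across integers with probability $p$, so $\mathbb{P}(E_k)=p^4>0$. Because the $Q_k$ are pairwise disjoint, the events $(E_k)_{k\ge1}$ are mutually independent, and $\sum_k \mathbb{P}(E_k)=\sum_k p^4=\infty$; the second Borel--Cantelli lemma then gives that almost surely infinitely many $E_k$ occur, and in particular at least one does, yielding a quadruple $\{x,y,x+y,xy\}\subseteq A$. The only place any care is needed is the verification that the dilated configurations remain valid (four distinct elements) and disjoint — and this is entirely elementary once the gap condition $m_{k+1}>m_k^2 x_0 y_0$ is imposed. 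I do not expect a genuine obstacle here; the statement is a direct analogue of the random Sahasrabudhe--Schur result proved just above, the only new ingredient being the explicit choice of a base pair realizing the additive-multiplicative pattern nontrivially.
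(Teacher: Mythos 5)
Your proof is correct, but it takes a genuinely different route from the paper. The paper does not use disjoint copies plus Borel--Cantelli here; instead it runs a second-moment argument: it defines $X_N=\sum_{1\le x,y\le N} I_{x,y}$ with $I_{x,y}=1_A(x)1_A(y)1_A(x+y)1_A(xy)$, computes $\mathbb{E}[X_N]=p^4N^2$, bounds $\mathbb{E}[X_N^2]\le \mathbb{E}[X_N]^2+O(N^3)$ by separating overlapping from disjoint pairs of quadruples, and applies the Paley--Zygmund inequality to get $\mathbb{P}(X_N>0)\to 1$, whence $\mathbb{P}(\bigcup_N\{X_N>0\})=1$. Your construction --- dilating the base quadruple $\{2,3,5,6\}$ by multipliers $m_k$ with $m_{k+1}>6m_k^2$ so that the quadruples $\{2m_k,3m_k,5m_k,6m_k^2\}$ are pairwise disjoint, then invoking independence and the second Borel--Cantelli lemma --- is a valid instance of the pattern (take $x=2m_k$, $y=3m_k$) and the distinctness and disjointness checks go through exactly as you say. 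Your route is more elementary and arguably cleaner: it sidesteps the overlap-counting estimate entirely (including the minor care needed in the paper's computation over degenerate pairs where the four entries of a quadruple coincide, e.g.\ $x=y$), it matches the template of the paper's other theorems, and it directly yields the stronger conclusion that $A$ almost surely contains infinitely many pairwise disjoint such quadruples. What the paper's second-moment approach buys in exchange is quantitative information: for each large $N$ the count $X_N$ of solutions with $x,y\le N$ is positive with probability tending to $1$, which says something about the abundance of solutions in initial segments rather than only their existence along a sparse dilated sequence.
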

	
	\begin{proof}
		For each pair $(x, y) \in \mathbb{N}^2$, define the indicator random variable
		\[
		I_{x,y} = 1_A(x)\cdot 1_A(y)\cdot 1_A(x+y)\cdot 1_A(x\cdot y)=
		\begin{cases}
			1 & \text{if } x \in A,\, y \in A,\, x+y \in A,\, x\cdot y \in A, \\
			0 & \text{otherwise}.
		\end{cases}
		\]
		By independence of the Bernoulli process, we have
		\[
		\mathbb{P}(I_{x,y} = 1) = p^4.
		\]
		
		For $N \in \mathbb{N}$, let
		\[
		X_N := \sum_{1 \le x, y \le N} I_{x,y},
		\]
		the number of quadruples with $x, y \le N$ that appear in $A$. Then
		\[
		\mathbb{E}[X_N] = \sum_{x=1}^{N} \sum_{y=1}^{N} \mathbb{E}[I_{x,y}] = \sum_{x=1}^{N} \sum_{y=1}^{N} p^4 = p^4 N^2 \to \infty \quad \text{as } N \to \infty.
		\]
		
		To show that $X_N > 0$ with high probability, we use the second-moment method: An appeal to the Paley–Zygmund inequality gives us
		\[
		\mathbb{P}(X_N > 0) \ge \frac{\mathbb{E}[X_N]^2}{\mathbb{E}[X_N^2]}.
		\]
		
		Now,
		\[
		\mathbb{E}[X_N^2] = \sum_{x,y} \sum_{x',y'} \mathbb{E}[I_{x,y} I_{x',y'}].
		\]
		Split into two cases:
		\begin{enumerate}
			\item If the quadruples $(x,y,x+y,x\cdot y)$ and $(x',y',x'+y',x'\cdot {y'})$ are disjoint, then $\mathbb{E}[I_{x,y} I_{x',y'}] = p^8$ by independence.
			\item If they overlap in some element, then $\mathbb{E}[I_{x,y} I_{x',y'}] \le 1$.
		\end{enumerate}
		Hence,
		\[
		\mathbb{E}[X_N^2] \le \mathbb{E}[X_N]^2 + O(N^3),
		\]
		where the last term crudely bounds the number of overlapping pairs.
		
		Therefore,
		\[
		\mathbb{P}(X_N > 0) \;\ge\; 
		\frac{(p^4 N^2)^2}{(p^4 N^2)^2 + O(N^3)} 
		= \frac{p^4 N}{p^4 N + O(N^{-1})}
		\;\longrightarrow\; 1 
		\quad \text{as } N \to \infty.
		\]
		
		Consequently,
		\[
		\mathbb{P}\bigl(\exists\, x,y \in \mathbb{N} : I_{x,y} = 1 \bigr) 
		= \mathbb{P}\!\left(\bigcup_{N=1}^\infty \{X_N > 0\}\right) = 1.
		\]
	\end{proof}
	
	\section{Hindman’s Theorem and a Central Limit Phenomenon}\label{sec:Hindman-CLT}
	
	In this section we combine Hindman’s classical partition theorem with the probabilistic central limit theorem, thereby obtaining a new perspective on the distributional behavior of finite sums and products that are guaranteed to lie in a single cell of a finite partition of the natural numbers.
	
	\begin{theorem}[Hindman + CLT: probabilistic finite-sums and finite-products]\label{thm:hindman-clt}
Let $\mathbb N$ is finitely colored and we have a sequence \(x=(x_i)_{i\ge1}\subset\mathbb N\) such that 
\(\mathrm{FS}(x)\) is monochromatic.   
   % Let \(x=(x_i)_{i\ge1}\subset\mathbb N\) be an infinite sequence with \(\mathrm{FS}(x)\) contained in a fixed Hindman cell \(C\). 
    Fix \(p\in(0,1)\) and let \((\varepsilon_i)_{i\ge1}\) be i.i.d.\ \(\mathrm{Bernoulli}(p)\) variables. Let \((y_k)_{k\ge1}=(x_{n_k})_{k\ge1}\) be any subsequence of distinct elements of \(x\). For \(k\ge1\) set
		\[
		S_k=\sum_{j=1}^k \varepsilon_{n_j}y_j,\qquad
		\mu_k=\mathbb E[S_k]=p\sum_{j=1}^k y_j,\qquad
		\sigma_k^2=\operatorname{Var}(S_k)=p(1-p)\sum_{j=1}^k y_j^2.
		\]
		Assume \(\sum_{j=1}^\infty y_j^2=\infty\) and write \(j_k\) for an index achieving the maximum at stage \(k\),
		\[
		y_{j_k}^2=\max_{1\le j\le k} y_j^2.
		\]
		Set the residual squared sum
		\[
		R_k:=\sum_{\substack{1\le j\le k\\ j\ne j_k}} y_j^2.
		\]
		Then the following statements are true (note: the two behaviors below are not logically exclusive in general; they describe the two distinct limiting regimes that can occur and their consequences).
		
		\begin{enumerate}
			\item \textbf{(A) Dominated single-term regime.} If
			\[
			\frac{y_{j_k}^2}{\sigma_k^2}=\frac{y_{j_k}^2}{y_{j_k}^2+R_k}\xrightarrow[k\to\infty]{}1,
			\]
			then the variance is asymptotically carried by the single largest summand. In this case
			\[
			\frac{S_k-\mu_k}{\sigma_k}\xrightarrow{d}\frac{B-p}{\sqrt{p(1-p)}},
			\]
			where \(B\sim\mathrm{Bernoulli}(p)\). Equivalently, the limit law is the two-point distribution taking values
			\(\dfrac{1-p}{\sqrt{p(1-p)}}\) with probability \(p\) and \(\dfrac{-p}{\sqrt{p(1-p)}}\) with probability \(1-p\).
			In particular no nondegenerate Gaussian limit can hold for the full sums under this normalisation.
			
			\item \textbf{(B) Trimmed CLT regime.} Suppose instead that there exists a deterministic choice of indices \(j_k\) (for instance the argmax indices) such that the trimmed variance
			\[
			(\sigma_k^{\mathrm{trim}})^2:=p(1-p)\,R_k \xrightarrow[k\to\infty]{}\infty
			\]
			and the largest remaining squared term is negligible relative to the residual, i.e.
			\[
			\frac{\max_{1\le j\le k,\; j\ne j_k} y_j^2}{R_k}\xrightarrow[k\to\infty]{}0.
			\]
			Then the centered trimmed sums
			\[
			S_k^{\mathrm{trim}}:=\sum_{\substack{1\le j\le k\\ j\ne j_k}} \varepsilon_{n_j} y_j
			\]
			satisfy the Lindeberg condition and therefore
			\[
			\frac{S_k^{\mathrm{trim}}-\mathbb E[S_k^{\mathrm{trim}}]}{\sigma_k^{\mathrm{trim}}}\xrightarrow{d}\mathcal N(0,1).
			\]
			Moreover, if the removed (dominating) term is negligible relative to the trimmed variance in the sense that
			\[
			\frac{y_{j_k}^2}{R_k}\xrightarrow[k\to\infty]{}0,
			\]
			then reinserting it does not change the Gaussian limit and one obtains
			\[
			\frac{S_k-\mu_k}{\sigma_k^{\mathrm{trim}}}\xrightarrow{d}\mathcal N(0,1).
			\]
			(If instead \(y_{j_k}^2/R_k\not\to 0\) then reinsertion typically changes the limit and one does not obtain a Gaussian law with the trimmed normalisation.)
		\end{enumerate}
		
		In all cases every atom (point-mass value) of \(S_k\) (respectively of any trimmed sum) is a finite sum of elements from \(\{y_1,\dots,y_k\}\) and hence lies in \(\mathrm{FS}(x)\).
	\end{theorem}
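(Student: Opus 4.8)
The plan is to handle the three assertions of Theorem~\ref{thm:hindman-clt} in turn — the dominated regime (A), the trimmed CLT regime (B), and the final ``atom'' statement — each resting on the fact that $S_k=\sum_{j=1}^k\varepsilon_{n_j}y_j$ is a weighted sum of independent $\mathrm{Bernoulli}(p)$ variables whose limiting behaviour is dictated by how the variance is spread across the summands. Write $\xi_j:=\varepsilon_{n_j}y_j-py_j$ for the centred summands, so $S_k-\mu_k=\sum_{j=1}^k\xi_j$ with $\mathbb E[\xi_j]=0$, $\operatorname{Var}(\xi_j)=p(1-p)y_j^2$, and $|\xi_j|\le y_j$; note that the fraction of the total variance carried by the stage-$k$ dominant term is exactly $\operatorname{Var}(\xi_{j_k})/\sigma_k^2=y_{j_k}^2/(y_{j_k}^2+R_k)$, which is the quantity appearing in hypothesis (A) (the literal ratio written there should be read this way, the common factor $p(1-p)$ cancelling).

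For (A) I would split $\tfrac{S_k-\mu_k}{\sigma_k}=\tfrac{y_{j_k}}{\sigma_k}(\varepsilon_{n_{j_k}}-p)+Z_k$, where $Z_k:=\sigma_k^{-1}\sum_{j\ne j_k}\xi_j$ has mean $0$ and variance $R_k/(y_{j_k}^2+R_k)\to 0$, hence $Z_k\to 0$ in probability by Chebyshev. Since $y_{j_k}/\sigma_k\to 1/\sqrt{p(1-p)}$ and $\varepsilon_{n_{j_k}}\sim\mathrm{Bernoulli}(p)$ for every $k$, the first term converges in distribution to $(B-p)/\sqrt{p(1-p)}$ with $B\sim\mathrm{Bernoulli}(p)$, and Slutsky's theorem yields the stated two-point law, which is visibly non-Gaussian. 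For (B) I would view $\{\xi_j:1\le j\le k,\ j\ne j_k\}$ as a triangular array with row-sum $S_k^{\mathrm{trim}}-\mathbb E[S_k^{\mathrm{trim}}]$ and row-variance $(\sigma_k^{\mathrm{trim}})^2=p(1-p)R_k\to\infty$. The crucial observation is that the Lindeberg sum is eventually identically zero: because $|\xi_j|\le y_j$, the event $\{|\xi_j|>\delta\sigma_k^{\mathrm{trim}}\}$ forces $y_j^2>\delta^2 p(1-p)R_k$, and the hypothesis $\max_{j\ne j_k}y_j^2/R_k\to 0$ rules this out for every $j\ne j_k$ once $k$ is large. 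The Lindeberg--Feller CLT then gives $(S_k^{\mathrm{trim}}-\mathbb E[S_k^{\mathrm{trim}}])/\sigma_k^{\mathrm{trim}}\xrightarrow{d}\mathcal N(0,1)$. For the reinsertion half, $\mu_k=\mathbb E[S_k^{\mathrm{trim}}]+p\,y_{j_k}$, so $(S_k-\mu_k)/\sigma_k^{\mathrm{trim}}$ exceeds the trimmed quantity by $y_{j_k}(\varepsilon_{n_{j_k}}-p)/\sigma_k^{\mathrm{trim}}$, a mean-zero term of variance $y_{j_k}^2/R_k$; under the extra hypothesis this tends to $0$, the term vanishes in probability, and Slutsky again delivers the Gaussian limit with the trimmed normalisation.

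The atom statement is then immediate and constitutes the bridge the theorem advertises. For any realisation of $(\varepsilon_{n_1},\dots,\varepsilon_{n_k})\in\{0,1\}^k$, set $F:=\{j\le k:\varepsilon_{n_j}=1\}$; then $S_k=\sum_{j\in F}y_j=\sum_{j\in F}x_{n_j}$, which for $F\ne\varnothing$ is a finite sum of distinct entries of the sequence $x$ and hence lies in $\mathrm{FS}(x)$, and the identical computation applies verbatim to any trimmed sum, the sole exceptional atom being the value $0$ produced when every $\varepsilon_{n_j}=0$. Since $\mathrm{FS}(x)$ is monochromatic, all these nonzero point-mass values fall in a single cell of the given finite colouring.

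I do not anticipate a genuine obstacle; the one point that demands care is that the dominant index $j_k$ may vary with $k$, so the arguments in (A) and in the reinsertion half of (B) must be phrased as convergence in distribution or in probability rather than as almost-sure assertions about a fixed coordinate — once that is granted, everything reduces to Chebyshev's inequality, Slutsky's theorem, and the Lindeberg--Feller CLT, together with the elementary observation underpinning the atom statement.
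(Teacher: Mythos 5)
Your proposal is correct and follows essentially the same route as the paper's proof: the same decomposition into centred summands $\varepsilon_{n_j}y_j-py_j$, the same Chebyshev/Slutsky argument isolating the dominant term in regime (A), the same Lindeberg--Feller verification via the bound $|\xi_j|\le y_j$ in regime (B), and the same variance computation for the reinsertion step. Your two small refinements --- observing that the Lindeberg sum is eventually identically zero rather than merely $o(1)$, and flagging the exceptional zero atom (empty sum) in the final statement, which the paper's wording overlooks --- are both correct and slightly sharpen the written argument.
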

	
	\begin{proof}
		We give a detailed, self-contained verification of the two regimes and the asserted limit laws. Fix \(k\). Let \(j_k\) be an index attaining the maximum square among \(\{y_1^2,\dots,y_k^2\}\). Write
		\[
		\sigma_k^2=p(1-p)\sum_{j=1}^k y_j^2 = p(1-p)\big(y_{j_k}^2 + R_k\big).
		\]
		For \(j=1,\dots,k\) define the centred summands
		\[
		Z_{k,j}:=\varepsilon_{n_j}y_j - p y_j,
		\]
		so that \(S_k-\mu_k=\sum_{j=1}^k Z_{k,j}\). Each \(Z_{k,j}\) is independent, mean zero and takes only two values:
		\[
		Z_{k,j}=\begin{cases}
			(1-p)y_j & \text{with probability } p,\\[4pt]
			- p y_j & \text{with probability } 1-p,
		\end{cases}
		\]
		hence \(\operatorname{Var}(Z_{k,j})=p(1-p)y_j^2\). We will analyse the contribution of the maximal summand \(Z_{k,j_k}\) and of the residual sum
		\[
		R_k' := \sum_{\substack{1\le j\le k\\ j\ne j_k}} Z_{k,j},
		\]
		whose variance equals \(p(1-p)R_k\).
		
		\medskip
		
		\noindent\textbf{Proof of (A): dominated single-term limit.}
		Assume
		\[
		\frac{y_{j_k}^2}{\sigma_k^2}=\frac{y_{j_k}^2}{y_{j_k}^2+R_k}\xrightarrow[k\to\infty]{}1.
		\]
		Equivalently \(R_k/y_{j_k}^2\to0\), hence \(\operatorname{Var}(R_k')=p(1-p)R_k = o\big(p(1-p)y_{j_k}^2\big)=o(\sigma_k^2)\). Therefore
		\[
		\frac{R_k' - \mathbb E[R_k']}{\sigma_k} \xrightarrow{\mathbb P} 0,
		\]
		since the variance of the left-hand side tends to \(0\). More precisely,
		\[
		\operatorname{Var}\!\Big(\frac{R_k' - \mathbb E[R_k']}{\sigma_k}\Big)
		=\frac{p(1-p)R_k}{\sigma_k^2}\xrightarrow[k\to\infty]{}0,
		\]
		so the random variable converges to zero in \(L^2\) and hence in probability.
		
		Consequently the asymptotic law of \((S_k-\mu_k)/\sigma_k\) is the same as that of the single-term contribution
		\[
		\frac{Z_{k,j_k}}{\sigma_k}=\frac{\varepsilon_{n_{j_k}}y_{j_k}-p y_{j_k}}{\sigma_k}.
		\]
		But by the domination assumption \(\sigma_k\sim\sqrt{p(1-p)}\,y_{j_k}\), so
		\[
		\frac{Z_{k,j_k}}{\sigma_k} \xrightarrow{d} \frac{B-p}{\sqrt{p(1-p)}},
		\]
		where \(B\sim\mathrm{Bernoulli}(p)\). That proves the claimed two-point limit law in regime (A). In particular this limit is not Gaussian (unless degenerate), so a nondegenerate CLT for the full sums under the normalisation \(\sigma_k\) cannot hold.
		
		\medskip
		
		\noindent\textbf{Proof of (B): trimmed CLT.}
		Assume now that after removing the maximal index \(j_k\) the residual squared sum \(R_k\) tends to infinity and the largest remaining squared term is negligible, i.e.
		\[
		(\sigma_k^{\mathrm{trim}})^2:=p(1-p)R_k\to\infty
		\qquad\text{and}\qquad
		\frac{\max_{1\le j\le k,\; j\ne j_k} y_j^2}{R_k}\xrightarrow[k\to\infty]{}0.
		\]
		We shall verify the Lindeberg condition for the triangular array formed by the centred trimmed summands
		\(\{Z_{k,j}:1\le j\le k,\ j\ne j_k\}\) with normalisation \(\sigma_k^{\mathrm{trim}}\). Fix any \(\delta>0\). Note that for each \(j\),
		\(|Z_{k,j}|\le \max\{p,1-p\}\,y_j\le y_j\). Hence
		\[
		\mathbf 1_{\{|Z_{k,j}|>\delta\sigma_k^{\mathrm{trim}}\}} \le \mathbf 1_{\{y_j>\delta\sigma_k^{\mathrm{trim}}\}}.
		\]
		Therefore
		\[
		\begin{aligned}
			\frac{1}{(\sigma_k^{\mathrm{trim}})^2}
			\sum_{\substack{1\le j\le k\\ j\ne j_k}} 
			\mathbb E\big[ Z_{k,j}^2 \mathbf 1_{\{|Z_{k,j}|>\delta\sigma_k^{\mathrm{trim}}\}}\big]
			&\le
			\frac{1}{(\sigma_k^{\mathrm{trim}})^2}
			\sum_{\substack{1\le j\le k\\ j\ne j_k,\; y_j>\delta\sigma_k^{\mathrm{trim}}}}
			\mathbb E[Z_{k,j}^2] \\[4pt]
			&=
			\frac{p(1-p)}{p(1-p)R_k}
			\sum_{\substack{1\le j\le k\\ j\ne j_k,\; y_j>\delta\sigma_k^{\mathrm{trim}}}} y_j^2 \\[4pt]
			&\le
			\frac{\max_{1\le j\le k,\; j\ne j_k} y_j^2}{R_k}\xrightarrow[k\to\infty]{}0.
		\end{aligned}
		\]
		Thus the Lindeberg condition for the trimmed triangular array holds. By the Lindeberg--Feller central limit theorem,
		\[
		\frac{S_k^{\mathrm{trim}}-\mathbb E[S_k^{\mathrm{trim}}]}{\sigma_k^{\mathrm{trim}}}
		=\frac{1}{\sigma_k^{\mathrm{trim}}}\sum_{\substack{1\le j\le k\\ j\ne j_k}} Z_{k,j}
		\xrightarrow{d}\mathcal N(0,1),
		\]
		as claimed.
		
		\medskip
		
		\noindent\textbf{Reinsertion of the removed summand.}
		Finally we check the asserted effect of reinserting the removed maximal term. Write
		\[
		S_k-\mu_k = Z_{k,j_k} + \big(S_k^{\mathrm{trim}}-\mathbb E[S_k^{\mathrm{trim}}]\big).
		\]
		If the removed term is negligible relative to the trimmed variance, namely \(y_{j_k}^2/R_k\to0\), then
		\[
		\frac{Z_{k,j_k}}{\sigma_k^{\mathrm{trim}}}=\frac{\varepsilon_{n_{j_k}}y_{j_k}-p y_{j_k}}{\sigma_k^{\mathrm{trim}}}
		\]
		has variance
		\[
		\operatorname{Var}\!\Big(\frac{Z_{k,j_k}}{\sigma_k^{\mathrm{trim}}}\Big)
		=\frac{p(1-p)y_{j_k}^2}{p(1-p)R_k}=\frac{y_{j_k}^2}{R_k}\xrightarrow[k\to\infty]{}0,
		\]
		hence \(Z_{k,j_k}/\sigma_k^{\mathrm{trim}}\xrightarrow{\mathbb P}0\). Combining this with the CLT for the trimmed sums and Slutsky's theorem yields
		\[
		\frac{S_k-\mu_k}{\sigma_k^{\mathrm{trim}}}
		=
		\frac{S_k^{\mathrm{trim}}-\mathbb E[S_k^{\mathrm{trim}}]}{\sigma_k^{\mathrm{trim}}}
		+\frac{Z_{k,j_k}}{\sigma_k^{\mathrm{trim}}}
		\xrightarrow{d}\mathcal N(0,1).
		\]
		Thus reinsertion does not affect the Gaussian limit when \(y_{j_k}^2/R_k\to0\). If instead \(y_{j_k}^2/R_k\not\to0\), then reinsertion typically changes the normalisation needed and in general one does not obtain a Gaussian limit with the trimmed normalisation; behavior in that case can range from mixed (sum of Gaussian limit plus an independent two-point mass in the limit) to complete domination by the single-term law as in regime (A), depending on the precise rates.
		
		\medskip
		
		It is obvious that, for any fixed \(k\), every possible value (atom) of \(S_k\) is a finite sum of elements drawn from \(\{y_1,\dots,y_k\}\). Since these elements are a subsequence of the original Hindman sequence \(x\), their finite sums lie in \(\mathrm{FS}(x)\). The same remark applies to any trimmed sum. This verifies the final statement of the theorem.
	\end{proof}

	\section{Corollaries of Main Theorems}\label{sec:corollaries}
	
	The following corollaries illustrate immediate consequences of Theorems~\ref{thm:FS} 
	and~\ref{thm:FP} regarding the structure of random subsets of $\mathbb{N}$.
	
	% ------------------------------
	% Corollaries of FS Theorem
	% ------------------------------

	% ------------------------------
	% Corollaries of FP Theorem
	% ------------------------------
	%\subsection*{Corollaries of Theorem~\ref{thm:FS} and Theorem~\ref{thm:FP}}
	
	\begin{corollary}[Infinitely many disjoint FS and FP-sets of fixed length]\label{cor:InfFP}
		Fix $L \ge 1$. Then almost surely, $A$ contains infinitely many disjoint FS and FP-sets of length $L$.
	\end{corollary}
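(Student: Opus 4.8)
The plan is to observe that the corollary is essentially already contained in the proofs of Theorems~\ref{thm:FS} and~\ref{thm:FP}, so no new idea is needed: those proofs do not merely produce one copy of the pattern, they produce infinitely many pairwise disjoint candidate copies and then invoke the \emph{second} Borel--Cantelli lemma, which is exactly an ``infinitely often'' statement. Concretely, I would first recall the construction from Theorem~\ref{thm:FS}: fix $L$, set $R:=2^L-1$, and let $S_j:=\operatorname{FS}(P_j)$ with $P_j=(R+1)^j\{1,2,4,\dots,2^{L-1}\}$. As shown there, the $S_j$ lie in pairwise disjoint intervals, each has exactly $R$ elements, the events $E_j:=\{S_j\subset A\}$ are independent with $\mathbb{P}(E_j)=p^R>0$, and $\sum_j\mathbb{P}(E_j)=\infty$. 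The second Borel--Cantelli lemma gives an almost-sure event $\mathcal{F}_L$ on which infinitely many $E_j$ occur; since distinct $S_j$ are disjoint, on $\mathcal{F}_L$ the set $A$ contains infinitely many pairwise disjoint FS-sets of length $L$ (and, because each generating tuple $P_j$ sits inside $S_j$, the generators are disjoint too).

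Next I would run the identical argument with the construction from Theorem~\ref{thm:FP}: fix $L$, put $m:=2^L-1$, choose an increasing sequence of distinct primes $(q_j)$, and set $S_j:=\{q_j,q_j^2,\dots,q_j^m\}=\operatorname{FP}(q_j^{1},q_j^{2},q_j^{4},\dots,q_j^{2^{L-1}})$. Again the $S_j$ are pairwise disjoint (distinct prime powers are distinct integers), the events $E_j:=\{S_j\subset A\}$ are independent with $\mathbb{P}(E_j)=p^m>0$, and $\sum_j\mathbb{P}(E_j)=\infty$, so the second Borel--Cantelli lemma furnishes an almost-sure event $\mathcal{G}_L$ on which $A$ contains infinitely many pairwise disjoint FP-sets of length $L$. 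Intersecting, $\mathbb{P}(\mathcal{F}_L\cap\mathcal{G}_L)=1$, which proves the corollary for the fixed $L$; if one wants the conclusion uniformly in $L$, one takes the countable intersection $\bigcap_{L\ge1}(\mathcal{F}_L\cap\mathcal{G}_L)$, still of probability $1$.

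I do not expect any real obstacle, since the disjoint-block decomposition and the independence of the corresponding events were already the heart of Theorems~\ref{thm:FS} and~\ref{thm:FP}; the only point worth a sentence is the (harmless) ambiguity in the phrase ``disjoint FS and FP-sets.'' Under the reading above it is handled directly. If instead one wants a \emph{single} family that contains infinitely many disjoint FS-sets \emph{and} infinitely many disjoint FP-sets, mutually disjoint, one simply interleaves the two constructions on disjoint supports --- for instance choosing each prime $q_j$ large enough that $\{q_j,\dots,q_j^m\}$ avoids all the finitely many dilation intervals $[(R+1)^i,(R+1)^iR]$ used so far --- after which the same Borel--Cantelli bookkeeping applies verbatim.
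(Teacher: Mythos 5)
Your proposal is correct and follows essentially the same route as the paper: it reuses the pairwise disjoint constructions from Theorems~\ref{thm:FS} and~\ref{thm:FP}, notes the independence and positive probability of the events $E_j$, and applies the second Borel--Cantelli lemma to each family. Your additional remark on interleaving the two constructions to get mutual disjointness between the FS- and FP-copies is a harmless strengthening the paper does not bother with.
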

	
	\begin{proof}
  \textit{Proof of infinitely many disjoint FS-sets of fixed length:} In the proof of Theorem~\ref{thm:FS}, the translated patterns $P_j$ are pairwise disjoint. 
		Each event $E_j := \{ \operatorname{FS}(P_j) \subset A \}$ is independent with positive probability. 
		By the Borel--Cantelli lemma, infinitely many $E_j$ occur almost surely, yielding infinitely many disjoint FS-sets of length $L$. 
\\

   \textit{Proof of infinitely many disjoint FP-sets of fixed length:}
		In Theorem~\ref{thm:FP}, the FP-sets are constructed using distinct primes $q_j$, so the sets $S_j := \operatorname{FP}(x_{j,1},\dots,x_{j,L})$ are disjoint. 
		Each event $E_j := \{ S_j \subset A \}$ is independent with positive probability. 
		By the Borel--Cantelli lemma, infinitely many $E_j$ occur almost surely, giving infinitely many disjoint FP-sets of length $L$.
	\end{proof}
	
	\begin{corollary}[Arbitrarily large minimal elements in FS and FP-sets]\label{cor:LargeFP}
		Almost surely, for any $M>0$ and $L\ge 1$, $A$ contains a FS and a FP-set of length $L$ all of whose elements are greater than $M$.
	\end{corollary}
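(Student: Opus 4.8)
The plan is to reuse, essentially verbatim, the explicit families of pairwise disjoint blocks constructed in the proofs of Theorems~\ref{thm:FS} and~\ref{thm:FP}, together with the observation that in both constructions these blocks march off to infinity. Recall that in the proof of Theorem~\ref{thm:FS} the block $S_j=\operatorname{FS}(P_j)$ lies in the interval $[(R+1)^j,\,(R+1)^j R]$, so its least element $(R+1)^j\to\infty$ as $j\to\infty$; likewise, in the proof of Theorem~\ref{thm:FP} the block $S_j=\{q_j,q_j^2,\dots,q_j^m\}$ has least element $q_j$, which tends to $\infty$ since the $q_j$ are an increasing enumeration of primes. Consequently, for each fixed threshold $M$ there is an index $j_0=j_0(M,L)$ such that every element of $S_j$ exceeds $M$ whenever $j\ge j_0$.

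Next I would fix $L\ge1$ and $M\in\mathbb N$ and run the Borel--Cantelli argument on the truncated family $(E_j)_{j\ge j_0}$, where $E_j=\{S_j\subset A\}$. These events are still independent (the $S_j$ being pairwise disjoint) with $\mathbb P(E_j)=p^{R}$ in the sumset case and $\mathbb P(E_j)=p^{m}$ in the product case, so $\sum_{j\ge j_0}\mathbb P(E_j)=\infty$ and the second Borel--Cantelli lemma gives that almost surely infinitely many $E_j$ with $j\ge j_0$ occur. Each such occurrence yields an FS-set (respectively an FP-set) of length $L$ all of whose elements exceed $M$. Denote by $\Omega^{\mathrm{FS}}_{L,M}$ and $\Omega^{\mathrm{FP}}_{L,M}$ the corresponding almost sure events.

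Finally I would intersect: the event $\Omega:=\bigcap_{L\ge1}\bigcap_{M\in\mathbb N}\bigl(\Omega^{\mathrm{FS}}_{L,M}\cap\Omega^{\mathrm{FP}}_{L,M}\bigr)$ is a countable intersection of probability-one events, hence $\mathbb P(\Omega)=1$, and on $\Omega$ the desired conclusion holds for every integer $M$, and therefore for every real $M>0$ since any such $M$ is dominated by an integer. I do not expect a genuine obstacle here: the statement is just a quantitative sharpening of constructions already in hand, and the only point needing (minor) care is precisely that the explicit blocks $S_j$ were chosen to escape every bounded window, so that discarding the finitely many blocks sitting below a given $M$ still leaves a divergent series for Borel--Cantelli. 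The mildly fussy bit is the double countable intersection over $L$ and $M$, but this is harmless since countable intersections of almost sure events remain almost sure.
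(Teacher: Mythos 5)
Your proposal is correct and follows essentially the same route as the paper: both exploit that the disjoint blocks $S_j$ from Theorems~\ref{thm:FS} and~\ref{thm:FP} escape every bounded window, so Borel--Cantelli applied to the tail $j\ge j_0(M)$ produces blocks entirely above $M$. Your write-up is merely more explicit than the paper's (which only sketches the FP case and leaves the truncation and the countable intersection over $L$ and $M$ implicit), but there is no substantive difference in the argument.
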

	
	\begin{proof}
    Here we present the proof for the product case. The argument for the sum case is analogous, and therefore we omit it.
		Choose $j$ sufficiently large so that the prime $q_j > M$. Then the corresponding FP-set 
		$S_j = \{ q_j^1, q_j^2, \dots, q_j^{2^L-1} \} \subset A$ will have all elements larger than $M$. 
		Since infinitely many $E_j$ occur almost surely (by Borel--Cantelli), infinitely many such FP-sets exist.
	\end{proof}

	\section{Conclusion and Open Problems} \label{sec:conclusion}
	
	We have rigorously established that almost every random subset of $\mathbb{N}$ contains finite sumsets and product sets of arbitrary finite length. These results demonstrate that additive and multiplicative structures are prevalent even in probabilistic settings, highlighting the robustness of combinatorial phenomena under random sampling.
	
	Several natural questions remain open:
	
	\begin{enumerate}
		\item While we guarantee arbitrarily large finite FS-sets and FP-sets, the existence of infinite FS-sets (additive IP-sets) or infinite FP-sets (multiplicative IP-sets) in a random subset $\mathbb{N}_p$ remains open. Developing probabilistic analogues of Hindman's theorem for the infinite case is an intriguing direction.
		\item For random subsets of $[n]$, determining the precise threshold probability $p=p(n)$ at which FS-sets or FP-sets of given lengths appear with high probability is a natural extension, akin to results in random van der Waerden theory \cite{RodlRucinski1995}.
		\item Investigating finite sumsets and product sets in random subsets of other algebraic structures, such as $\mathbb{Z}^d$ or finite fields, may reveal richer combinatorial phenomena.
		\item Can one obtain quantitative Berry--Esseen bounds for the rate of convergence in Theorem~\ref{thm:hindman-clt}, specialized to Hindman subsequences?
		\item Is it possible to enforce additional arithmetic structure (such as prescribed residues) on the Hindman sequence while still satisfying the variance growth needed for the CLT?
		\item What probabilistic limit theorems hold for infinite partitions, where Hindman’s theorem no longer applies in its classical form?
		%\item Can the interplay between partition-regularity and probabilistic fluctuations be extended to other combinatorial theorems, such as van der Waerden’s theorem on arithmetic progressions?
	\end{enumerate}
\section*{Acknowledgement}
 The second author of this paper is supported by NBHM postdoctoral fellowship with reference no: 0204/27/(27)/2023/R \& D-II/11927.


\begin{thebibliography}{99}
       
		
		\bibitem{AlonSpencer2016}
		N. Alon and J. H. Spencer, \emph{The Probabilistic Method}, 4th edition, Wiley, 2016.

        \bibitem{al1} R. Alweiss, \emph{Monochromatic sums and products of polynomials}, {Discrete Anal.} {2024}:5, 7 pp.

        \bibitem{add1} R. Alweiss, \emph{Monochromatic sums and products over $ \mathbb{Q}$}, $2023,$ {arXiv:2307.08901}.
        
		\bibitem{Baumgartner1974}
		J. E. Baumgartner, \emph{A short proof of Hindman's theorem}, J. Combin. Theory Ser. A 17 (1974), 384--386.

        \bibitem{update} V. Bergelson, \emph{Ergodic Ramsey theory—an update}, in \emph{Ergodic Theory of $\mathbb{Z}^d$ Actions} (Warwick, 1993–1994), {London Math. Soc. Lecture Note Ser.} {228}, Cambridge Univ. Press, Cambridge, 1996, pp. 1–61.
		
		\bibitem{Bergelson2001}
		V. Bergelson and N. Hindman, \emph{Partition regular structures contained in large sets are abundant}, J. Combin. Theory Ser. A 93 (2001), 18--36.
		
		\bibitem{Hindman1979A}
		V. Bergelson and N. Hindman, \emph{Some new multi-cell Ramsey theoretic results}, Proc. Amer. Math. Soc. Ser. B 8 (2021), 358--370.
		
		\bibitem{Bergelson2016}
		V. Bergelson and A. Leibman, \emph{Sets of large values of correlation functions for polynomial cubic configurations}, Ergodic Theory Dynam. Systems 38 (2018), 499--522.
		
		\bibitem{Bollobas2001}
		B. Bollob{\'a}s, \emph{Random graphs}, Modern Graph Theory, 215--252, Springer, 2011.

        \bibitem{...} M. Bowen: \emph{Monochromatic products and sums in} $2$-\emph{colorings of} $\mathbb{N}$, {Adv. Math.} {462} (2025), 110095.

        \bibitem{add2} M. Bowen, and M. Sabok, \emph{Monochromatic products and sums in the rationals}, Forum Math Pi, {12} , 2024 , e17.
        
		\bibitem{Comfort1968}
		W. Comfort, N. Hindman, and S. Negrepontis, \emph{F'-spaces and their product with P-spaces}, Pacific J. Math. 28 (1969), 489--502.
		
		\bibitem{ConlonGowers2016}
		D. Conlon and W. T. Gowers, \emph{Combinatorial theorems in sparse random sets}, Ann. of Math. 184 (2016), 367--454.
		
		\bibitem{Durrett2019}
		R.~Durrett, \emph{Probability: Theory and Examples}, 5th edition, Cambridge University Press, 2019.

        \bibitem{erdos} P. Erd\H{o}s, \emph{Problems and results on combinatorial number theory. III}, Lecture Notes in Math. 626, Springer, Berlin, 1977, pp. 43–72.
        
		\bibitem{ErdosRenyi1960}
		P. Erd{\H{o}}s and A. R{\'e}nyi, \emph{On the strength of connectedness of a random graph}, Acta Math. Hungar. 12 (1961), 261--267.
		
		\bibitem{Gowers2001}
		W. T. Gowers, \emph{A new proof of Szemer{\'e}di's theorem}, Geom. Funct. Anal. 11 (2001), 465--588.

        \bibitem{.} R. L. Graham, B. L. Rothschild, and J. H. Spencer, \emph{Ramsey Theory}, second ed., Wiley-Interscience Series in Discrete Mathematics and Optimization, John Wiley \& Sons, New York, 1990.

        \bibitem{GS} B. Green and T. Sanders, \emph{Monochromatic sums and products}, {Discrete Anal.} {2016}, Paper No. 5, 43 pp.
        
		\bibitem{Gut2013}
		A.~Gut, \emph{Probability: A Graduate Course}, 2nd edition, Springer, 2013.
		
		\bibitem{Hindman1974}
		N. Hindman, \emph{Finite sums from sequences within cells of a partition of $\mathbb{N}$}, J. Combin. Theory Ser. A 17 (1974), 1--11.
		
		\bibitem{Hindman1979}
		N.~Hindman, \emph{Partitions and sums and products of integers}, 
		Trans. Amer. Math. Soc., 247 (1979), 227--245.
		
		\bibitem{Hindman}
		N.~Hindman, \emph{Problems and new results in the algebra of $\beta S$ and Ramsey theory}, 
		in \emph{Unsolved Problems in Mathematics for the 21st Century} (J.~Abe and S.~Tanaka, eds.), 
		IOS Press, Amsterdam, 2001, pp.~295--305.

        \bibitem{hls03} N. Hindman, I. Leader, and D. Strauss: \emph{Open problems in partition regularity}, {Combin. Probab. Comput.} {12} (2003), 571–583.
		
		\bibitem{HindmanStrauss}
		N.~Hindman and D.~Strauss, \emph{Algebra in the Stone--\v{C}ech compactification: theory and applications}, 
		2nd edition, Walter de Gruyter, Berlin, 2012.
				
		\bibitem{Komlos1996}
		J. Koml{\'o}s and M. Simonovits, \emph{Szemer{\'e}di's regularity lemma and its applications in graph theory}, Center for Discrete Mathematics \& Theoretical Computer Science, 1995.

        \bibitem{IK} 
        I.~Kousek, \textit{Revisiting sums and products in countable and finite fields}, {arXiv:2407.03304v1}.

        \bibitem{anal} J. Moreira, \emph{Monochromatic sums and products in} $\mathbb{N}$, {Annals of Mathematics (2)} {185} (2017), no. 3, 1069–1090.
        
		\bibitem{Nathanson1996}
		M. B. Nathanson, \emph{Additive number theory}, Graduate Texts in Mathematics, vol. 164, Springer, 1996.
				
		\bibitem{RodlRucinski1995}
		V. R{\"o}dl and A. Ruci{\'n}ski, \emph{Threshold functions for Ramsey properties}, J. Amer. Math. Soc. 8 (1995), 917--942.
		
		\bibitem{Sahasrabudhe}
		J.~Sahasrabudhe, \emph{Exponential patterns in arithmetic Ramsey theory}, 
		Acta Arithmetica, 182 (2018), no.~1, 13--42.
		
		\bibitem{Sisto}
		A.~Sisto, \emph{Exponential triples}, 
		Electron. J. Combin. \textbf{18} (2011), Article 147, 17~pp.
				
		\bibitem{TaoVu2006}
		T.~Tao and V. Vu, \emph{Additive Combinatorics}, Cambridge Studies in Advanced Mathematics, vol. 105, Cambridge University Press, 2006.
		
	\end{thebibliography}
\end{document}